\NeedsTeXFormat{LaTeX2e}

\documentclass[12pt]{article}
\textwidth= 13.5cm
\textheight= 22.0cm
\topmargin = -10pt
\evensidemargin=20pt
\oddsidemargin=20pt
\headsep=25pt
\parskip=10pt

\usepackage{amsmath}
\usepackage{amsthm}
\usepackage{amsfonts}
\usepackage{amssymb}
\usepackage{hyperref}

\pagestyle{headings}
\newtheorem{thm}{Theorem}[section]
\newtheorem{lem}[thm]{Lemma}

\newtheorem{cor}[thm]{Corollary}

\theoremstyle{definition}

\let\al=\alpha

\let\lm=\lambda

\let\vph=\varphi
\let\abs=\envert

\let\abs=\envert
\newcommand{\floor}[1]{\left\lfloor#1\right\rfloor}

\theoremstyle{remark}

\begin{document}
\title{Explicit formulae for primes in arithmetic progressions, I\footnote{2010 Mathematics 
Subject Classification:
11N13.}
\footnote{Key words and phrases: Primes in arithmetic progressions.}}
\author{Tomohiro Yamada}
\date{}
\maketitle

\begin{abstract}
We shall give an explicit formula for $\psi(x, q, a)$ with an error term of the form $C/\log^\alpha x$
under the condition that $q<\log^{\alpha_1} x$ is nonexceptional, for various values of $\alpha$ and $\alpha_1$.
We shall also give an explicit formula for $\psi(x, q, a)$ with error terms $C/\log^A x$
working whether $q$ is exceptional or nonexceptional, but under the condition that
$\frac{0.4923A}{\pi}q^{1/2}\log^2 q<\log x/\log\log x$.
Moreover, we shall give an explicit form of Bombieri-Vinogradov theorem over non-exceptional moduli.
\end{abstract}

\section{Introduction}\label{intro}
The prime number theorem for arithmetic progressions states that $\psi(x, q, a)=\sum_{p^e\leq x, p^e\equiv a\pmod{Q}}\log p$
is $x/\vph(k)+o(x)$ as $x\rightarrow\infty$.

Moreover, Siegel-Walfisz theorem states that
for any $\alpha>0$, the error term can be bounded by $O(x\exp-\frac{c\log^{3/5} x}{(\log\log x)^{1/5}})$
uniformly for $q<\log^\alpha x$.

Prime number theorem for arithmetic progressions has many applications in number theory.
Naturally we need have explicit estimates for the error term for the ordinary prime number theorem
to give explicit results in such applications.

The classical results are Rosser\cite{Ros} and Rosser and Schoenfeld\cite{RS1}\cite{RS2},
giving explicit estimates for the ordinary prime number theorem (i.e. the case $q=1, 2$).
It is after 1980's that various explicit formulae for primes in arithmetic progression have been obtained
by several authors such as McCurley\cite{MC1, MC2, MC3}, Ramar\'{e} and Rumely\cite{RR}, Dusart\cite{Dus},
Liu and Wang\cite{LW} and Kadiri\cite{Kad}.

The purpose of this paper is to give an explicit formula with an error term of the form $C/\log^{\al_2} x$
under the condition $q<\log^{\al_1} x$ for various values of $\al_1$ and $\al_2$.  Our main results
are the following two theorems.

\begin{thm}\label{thm11}
Let $\al_1, \al_2, Y_0=\log\log X_0, C$ be constants given in some column of Table \ref{tbl4}.
Let $q$ be a modulus $\leq \log^{\al_1} x$.
Let $E_0=1$ and $\beta_0$ denote the Siegel zero modulo $q$ if it exists and $E_0=0$ otherwise. 
If $(a, q)=1$ and $x\geq X_0$, then
\begin{equation}
-1+x^{-1}\sum_{\chi\pmod{q}}\abs{\psi(x, \chi)}<\frac{C}{\log^{\al_2} x}+E_0\frac{x^{\beta_0-1}}{\beta_0}
\end{equation}
and
\begin{equation}
\frac{\vph(q)}{x}\abs{\psi(x; q, a)-\frac{x}{\vph(q)}}<\frac{C}{\log^{\al_2} x}+E_0\frac{x^{\beta_0-1}}{\beta_0}.
\end{equation}
\end{thm}

\begin{thm}\label{thm12}
Let $A, Y_0=\log\log X_0, C$ be constants given in some column of Table \ref{tbl4}
with $(\alpha_1, \alpha_2)=(1, A)$ and $C^\prime=C+1/(1-AY_0/e^{Y_0})$.
If $x>X_0$ and $\frac{0.4923A}{\pi}q^{1/2}\log^2 q<\log x/\log\log x$,
then we have
\begin{equation}
-1+x^{-1}\sum_{\chi\pmod{q}}\abs{\psi(x, \chi)}<\frac{C^\prime}{\log^A x}
\end{equation}
and
\begin{equation}
\frac{\vph(q)}{x}\abs{\psi(x; q, a)-\frac{x}{\vph(q)}}<\frac{C^\prime}{\log^A x}.
\end{equation}
\end{thm}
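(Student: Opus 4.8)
The plan is to deduce Theorem~\ref{thm12} from Theorem~\ref{thm11} by absorbing the contribution of a possible exceptional zero into the main term, at the cost of the extra hypothesis on $q$. Applying Theorem~\ref{thm11} with $(\alpha_1,\alpha_2)=(1,A)$ gives
\[
\frac{\vph(q)}{x}\abs{\psi(x;q,a)-\frac{x}{\vph(q)}}<\frac{C\log\log x}{\log^A x}+E_0\left(\frac{x^{-\beta_0}}{1-\beta_0}+\frac{x^{\beta_0-1}}{\beta_0}\right).
\]
If $q$ is nonexceptional then $E_0=0$, and since $C^\prime>C$ the claim follows at once. Thus the whole content lies in the exceptional case $E_0=1$, where it suffices to show that the two-term Siegel contribution is at most $\frac{1.0043}{Y_0}\cdot\frac{\log\log x}{\log^A x}$; then the two main-term coefficients combine to $C+1.0043/Y_0=C^\prime$, as required.

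The heart of the matter is the dominant piece $x^{\beta_0-1}/\beta_0=\exp(-(1-\beta_0)\log x)/\beta_0$, and here the crucial input is an explicit lower bound for the gap $1-\beta_0$. I would produce one as follows: since $L(\beta_0,\chi)=0$ for the real primitive character $\chi$ modulo $q$, the mean value theorem gives $L(1,\chi)=(1-\beta_0)L^\prime(\xi,\chi)$ for some $\xi\in(\beta_0,1)$; feeding in an explicit bound $\abs{L^\prime(s,\chi)}\leq 0.4923\log^2 q$ near $s=1$ (the origin of the constant $0.4923$) together with a lower bound for $L(1,\chi)$ supplied by Dirichlet's class number formula (the origin of the factor $\pi$) yields $1-\beta_0\geq \frac{\pi}{0.4923\,q^{1/2}\log^2 q}$. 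The hypothesis may be rewritten as $\frac{\pi}{0.4923}\cdot\frac{\log x}{q^{1/2}\log^2 q}>A\log\log x$, and combining it with this lower bound is exactly what forces $(1-\beta_0)\log x>A\log\log x$; hence $x^{\beta_0-1}<\log^{-A}x$, the hypothesis having been calibrated precisely so that the constants $0.4923$ and $\pi$ cancel.

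It then remains to discard the first term and to assemble the constants. Since an exceptional zero satisfies $\beta_0>\tfrac12$, we have $x^{-\beta_0}<x^{-1/2}$, while the same lower bound for $1-\beta_0$ and the hypothesis give $\frac{1}{1-\beta_0}<\frac{\log x}{A\log\log x}$, so $x^{-\beta_0}/(1-\beta_0)$ is negligible against $\log^{-A}x$. Using finally that $\beta_0$ is close to $1$ and that $\log\log x>Y_0$ (which holds because $x>X_0$ forces $\log\log x>\log\log X_0=Y_0$), I expect the whole Siegel contribution to drop below $\frac{1.0043}{Y_0}\cdot\frac{\log\log x}{\log^A x}$, completing the deduction. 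The main obstacle is not the structure but the constant bookkeeping: one must secure the explicit derivative and class number constants so that they calibrate exactly with the factor $0.4923/\pi$ in the hypothesis, and then verify that the combined effect of the near-$1$ factor $1/\beta_0$, the residual size of $x^{\beta_0-1}$, and the negligible first term stays dominated by the engineered correction $1.0043/Y_0$ uniformly for all $x>X_0$ and all admissible $q$.
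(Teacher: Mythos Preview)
Your approach is essentially the paper's: apply Theorem~\ref{thm11} with $(\alpha_1,\alpha_2)=(1,A)$, and in the exceptional case bound the Siegel contribution by $1.0043/\log^A x$ via the inequality $1-\beta_0\ge \pi/(0.4923\,q^{1/2}\log^2 q)$ combined with the hypothesis and with $\log\log x>Y_0$. The one substantive difference is that the paper does not derive this lower bound for $1-\beta_0$ but quotes it as Theorem~3 of Liu--Wang~\cite{LW}, and it additionally invokes Lemma~2.1 of~\cite{LW} (an exceptional modulus satisfies $q\ge 987$) to make the factor $1/\beta_0$ explicit and arrive at the constant $1.004291$.
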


One of applications of our formula is obtaining (almost) completely explicit Bombieri-Vinogradov theorem.

The well-known Bombieri-Vinogradov theorem, first proved by Bombieri\cite{Bom},
states that for any constant $A>0$, there exists some constant $B$ such that
\begin{equation}
\sum_{q\leq \frac{x^\frac{1}{2}}{\log^B x}}\max_{(a, q)=1, 2\leq y\leq x}\abs{\psi(y; q, a)-\frac{\psi(y)}{\vph(q)}}=O\left(\frac{x}{\log^A x}\right),
\end{equation}
where $\pi(y; q, a)$ denotes the number of primes $\leq y$ congruent to $a\pmod{q}$.
Akbary and Hambrook\cite{AH} gave an explicit formula with $B=A+\frac{9}{2}$,
with moduli limited to those which is free of small prime factors.  Our explicit formula
enables us to give (almost) completely explicit version of Bombieri-Vinogradov theorem.

Before stating our version of Bombieri-Vinogradov theorem, we introduce
a result concerning zeros of Dirichlet L-functions.

\begin{lem}\label{lem10}
Define $\Pi(s, q)=\prod_{\chi\pmod{q}}L(s, \chi), R_0=6.3970$ and $R_1=2.0452$.
Then the function $\Pi(s, q)$ has at most one zero $\rho=\beta+it$ in the region
$0\leq\beta\leq 1-1/R_0\log \max\{q, q\abs{t}\}$.  If such a zero exists,
then it must be real and simple and must correspond to a nonprincipal real character $\chi\pmod{q}$.
Moreover, for any given $Q_1$, such a zero must satisfy $\beta<1-1/2R_1\log Q_1$ except
possibly one modulus $\leq Q_1$.
\end{lem}

This result easily follows from Theorems 1.1 and 1.3 of \cite{Kad}.

For a given $Q_1$, we call a modulus $q_0\leq Q_1$ to be exceptional up to $Q_1$ if $\Pi(s, q)$ has a zero $\rho=\beta+it$ with $\beta\geq 1-1/2R_1\log Q_1$.
Now we shall state our explicit version of Bombieri-Vinogradov theorem.

\begin{thm}\label{thm13}
Let $A$ be an integer with $6\leq A\leq 10$ and $Y_0=10.1 (A=6), 10.5 (A=7), 10.9 (A=8), 11.2 (A=9)$ and $11.5 (A=10)$,
$C$ be the constant in Table \ref{tbl4} corresponding to the column $(\alpha_1, \alpha, Y)=(A, A-3, Y_0)$,
$c_0, c_1$ be the constants defined by
\begin{equation}
\begin{split}
c_0= & \frac{2^{\frac{13}{2}}}{9\pi\log 2}\left(\frac{1}{3}+\frac{3}{2\log 2}\right)\left(\frac{2+\log(\log 2/\log(4/3))}{\log 2}\right)\sqrt\frac{\psi(113)}{113}\\
\end{split}
\end{equation}
and
\begin{equation}
c_1=\prod_p\left(1+\frac{1}{p(p-1)}\right)=\frac{\zeta(2)\zeta(3)}{\zeta(6)}.
\end{equation}
We note that $c_0<48.833$ and $c_1<1.9436$.

Let $Q=\frac{x^{1/2}}{\log^A x}$ and $1\leq Q_1\leq\log^A x$.  Let $q_0$ denote the exceptional modulus up to $Q_1$ if it exists.
If $\log\log x>X_0$, then the inequality
\begin{equation}\label{eq11}
\begin{split}
&\sum_{q\leq Q, q_0\nmid q}\max_{a\pmod{q}}\abs{\psi(x, q, a)-\frac{\psi(x)}{\vph(q)}}\\
&\qquad<x^\frac{1}{2}+\frac{c_1c_0(2+e^{-800})x}{\log^{A-\frac{9}{2}} x}+\frac{2c_0c_1x\log^\frac{9}{2} x}{Q_1}+\frac{c_1^2(C_0+e^{-17})x(1+A\log\log x)}{2\log^{A-4} x}.
\end{split}
\end{equation}
holds.
\end{thm}

Our explicit Bombieri-Vinogradov theorem is not completely explicit in two senses:
First, $\max_{2\leq y\leq x}$ should be removed. But in many applications,
this would not cause a serious problem.
Second, we must avoid exceptional moduli.  However, our version would still have several applications.
Indeed, combined with the author's explicit linear sieve \cite{Ymd1}
and a technique which discharges us from large exceptional moduli,
the author obtained explicit version of Chen's celebrated theorem that any sufficiently large even numbers can be written
as the sum of a prime and a product of at most two primes\cite{Ymd2};
the author showed that $\exp\exp 36$ suffices.

For calculations of constants, we used PARI-GP.  Our script is available from
\url{http://tyamada1093.web.fc2.com/math/files/prim0003pari.txt}
and can be used to calculate constants for arbitrary values of $\alpha$ and $\alpha_1$.

\section{Preliminary Lemmas}\label{lems}
We combine the method of McCurley in \cite{MC2} and results of Kadiri concerning the distribution of zeros
of Dirichlet L-functions in \cite{Kad}.  We begin by the following zero-free region.

Let $T=\log^{\al} x$, where $\al=\al_1+\al_2+3\geq 5$.
We let two constants $C_1, C_2$ be given in any column of Table \ref{tblc12}
and put
\begin{equation}
F(T)=\frac{T}{\pi}\log\frac{qT}{2\pi e}, R(T)=C_1\log qT+C_2.
\end{equation}

Then, Theorem 1 of \cite{Tru} gives that $N(\chi, T)\leq F(T)+R(T)$.
\begin{table}
\caption{constants $C_1, C_2$}\label{tblc12}
\begin{center}
\begin{small}
\begin{tabular}{| c | c |}
 \hline
$C_1$ & $C_2$ \\
 \hline
$0.247$ & $9.359$ \\
$0.264$ & $8.049$ \\
$0.281$ & $7.323$ \\
$0.298$ & $6.828$ \\
$0.315$ & $6.455$ \\
$0.332$ & $6.156$ \\
$0.349$ & $5.907$ \\
$0.365$ & $5.694$ \\
$0.382$ & $5.506$ \\
$0.399$ & $5.338$ \\
 \hline
\end{tabular}
\end{small}
\end{center}
\end{table}

For a Dirichlet character $\chi$ modulo $q$, we use $z(\chi)$ to represent the set of zeros $\rho=\beta+i\gamma$
of $L(s, \chi)$ with $\beta\geq 0$ and $\rho\neq 0$.
Moreover, for $H>1$ and $R>0$, we use $z_0(\chi, H, R)$ to represent the set of zeros $\rho=\beta+i\gamma$ of $L(s, \chi)$
with $\frac{1}{2}<\beta\leq 1-1/R\log qH, \abs\gamma\leq H$
and $z_1(\chi, H, R)$ to represent the set of zeros $\rho=\beta+i\gamma$ of $L(s, \chi)$
with $1-1/R\log qH<\beta\leq 1-1/R_0\log qH, \abs\gamma\leq H$.
Of course, GRH implies that these sets would be empty.

We set 
\begin{equation}
\Sigma=\sum_\chi\sum_{\rho\in z(\chi), \abs\gamma\leq T}\frac{x^{\beta-1}}{\abs\rho}.
\end{equation}

We can estimate error terms in the prime number theorem in arithmetic progressions by these sums.

Similarly to Theorem 8 of \cite{LW}, we have the following lemma.
\begin{lem}\label{lem31}
If $t>\exp\exp 6.8$, then we have
\begin{equation}\label{eq31}
\abs{\psi(t, \chi)-\delta(\chi)t}\leq \sum_{\rho\in z(\chi), \abs\gamma\leq T}\frac{x^\beta}{\abs\rho}+1.3833\frac{t\log^2 x}{T},
\end{equation}
where $\delta(\chi)=1$ if $\chi$ is principal and $\delta(\chi)=0$ otherwise.
\end{lem}
\begin{proof}
We shall mainly consider the case $\chi$ is primitive and show that
(\ref{eq31}) holds with $1.3833$ replaced by $1.38329$.
If $\chi\pmod{q}$ is induced by the primitive character $\chi_1\pmod{q_1}$, then
$\abs{\psi(t, \chi)-\psi(t, \chi_1)}\leq \log q/\log 2\leq A\log L/\log 2\leq e^{-20}tL^2/T$
and (\ref{eq31}) holds.
If $\chi=\chi_0$ is the principal character, then the following argument can be proceeded with
$\psi(t, \chi)$ replaced by $\psi(t, \chi)-t$.

We begin by seeing that Lemma 8 of \cite{CW} gives
$\sum_{\abs{\gamma-T}\leq 1}1\leq 3.5(0.5\log q(T+2)+0.59773)\leq C_3\log(qT)-1$,
where $C_3<1.83$, so that there exists a real $T_0$ such that $\abs{T_0-T}\leq 1$ and
\begin{equation}
\frac{1}{\abs{\gamma-T_0}}\leq C_3\log(qT).
\end{equation}
We can confirm that $L$ and $T$ are enough large for us to obtain
\begin{equation}
\psi(t, \chi)=\frac{1}{2\pi i}\int_{b-iT_0}{b+iT_0} -\frac{L^\prime}{L}(s, \chi)\frac{t_0^s}{s}ds+R_6,
\end{equation}
where $t_0=\floor{t+0.5}$, and
\begin{equation}
\psi(2.5, \chi)=\frac{1}{2\pi i}\int_{b-iT_0}{b+iT_0} -\frac{L^\prime}{L}(s, \chi)\frac{2.5^s}{s}ds+R_7,
\end{equation}
with $\abs{R_6}\leq 1.38305tL^2/T$ and $\abs{R_7}\leq 2L/(T-1)\leq e^{-1000} tL^2/T$.
Thus (4.5) in \cite{LW} holds with $\abs{R_8}\leq \abs{R_6}+\abs{R_7}+\log 2< 1.38306tL^2/T$
and (4.6) in \cite{LW} holds with $\abs{R_9}\leq \abs{R_8}+\exp(\log(t+1)/L)TL/(2\pi)\leq 1.38307tL^2/T$.

Using Lemmas 9 and $9^\prime$ of \cite{CW}, we have
\begin{equation}
\abs{\frac{L^\prime}{L}(\sigma+iT,\chi)}\leq 4.6\log(qT)+C_3^2 \log^2 (qT)
\end{equation}
for $-0.5\leq\sigma\leq b$, and
\begin{equation}
\abs{\frac{L^\prime}{L}(-0.5+it,\chi)}\leq 1.5\log q(T+2)+1.79319+\frac{35}{8}\log(T+2)+13.4.
\end{equation}
Thus we obtain
\begin{equation}
\begin{split}
\abs{\frac{1}{2\pi i}\int_{-0.5\pm iT_0}{b\pm iT_0} -\frac{L^\prime}{L}(s, \chi)\frac{t_0^s-2.5^s}{s}ds}
\leq \frac{0.00003tL^2}{T}
\end{split}
\end{equation}
and
\begin{equation}
\abs{\frac{1}{2\pi i}\int_{-0.5-iT_0}{-0.5+iT_0} -\frac{L^\prime}{L}(s, \chi)\frac{t_0^s-2.5^s}{s}ds}.
\leq 1.31\log T\log(qT)
\end{equation}

Hence (4.8) in \cite{LW} holds with $\abs{R_{10}}\leq 1.3831tL^2/T$.
Since $3.5(0.5\log q(T+2)+0.59773)<0.000092(T-1)L^2/T$,
(4.9) in \cite{LW} holds with the constant $1.38039$ replaced by $1.3832$.

By Theorem 1 of \cite{Tru}, we have $N(\chi, 1)\leq 0.399\log q+5.338$ and therefore
\begin{equation}
\begin{split}
\sum_{\abs\gamma\leq 1}\frac{1}{\Re \rho}
& \leq\frac{0.4923q^{\frac{1}{2}}\log^2 q}{\pi}+R_0(\log q)N(\chi, 1) \\
& \leq 0.33q^{\frac{1}{2}}\log^2 q\leq 0.39\al_1^2L^{\frac{\al_1}{2}}\log^2 L \\
& \leq e^{-1000}\frac{tL^2}{T}.
\end{split}
\end{equation}
We see that, using Theorem 6 of \cite{LW}, or Theorem 1 of \cite{Tru}, 
\begin{equation}
\sum_{1\leq \abs\gamma\leq T+1}\frac{1}{\abs\rho}
\leq\frac{N(\chi, T+1)}{T+1}+\int_1^{T+1}\frac{N(\chi, y)}{y^2}dy
\leq\frac{\log^2 (T+1)}{2\pi}
\leq e^{-1000}\frac{tL^2}{T}.
\end{equation}
Hence $\sum_{\abs\gamma\leq T+1}\abs{2.5^\rho/\rho}\leq e^{-999}tL^2/T$.

Similarly, observing that $t^{1-\beta}{1-\beta}$ with $1-\pi/(0.4934q^{1/2}\log^2 q)\leq\beta\leq 1/2$
takes the maximum at $\beta=1/2$, we obtain
\begin{equation}
\begin{split}
\sum_{\abs\gamma\leq 1, \beta\leq 1/2}\frac{t^\frac{1}{2}}{\Re \rho}
& \leq t^{\frac{1}{2}}(2+R_0(\log q)N(\chi, 1)) \\
& \leq 10t^{\frac{1}{2}}\log^2 q\leq 10\al_1^2 t^{\frac{1}{2}}\log^2 L\\
& \leq e^{-1000}\frac{tL^2}{T}
\end{split}
\end{equation}
and
\begin{equation}
\sum_{1\leq \abs\gamma\leq T+1}\frac{1}{\abs\rho}
\leq\frac{N(\chi, T+1)}{T+1}+\int_1^{T+1}\frac{N(\chi, y)}{y^2}dy
\leq\frac{\log^2 (T+1)}{2\pi}<e^{-500}\frac{tL^2}{T}.
\end{equation}

This proves (\ref{eq31}) with $1.3833$ replaced by $1.38329$.
In the imprimitive case, (\ref{eq31}) follows from the argument in the beginning of the proof
and the proof is complete.
\end{proof}

So that, it suffices to obtain upper bounds for $\Sigma$.
The following lemmas, combined with Lemma \ref{lem10}, enable us to obtain these upper bounds.
However, an upper bound for $\Sigma$ obtained by Lemma \ref{lem10} would be large.
We can improve an upper bound by dividing the sum $\Sigma$ as follows.

We additionally set the following two sums
\begin{equation}
\begin{split}
\mu_0=&\sum_\chi\sum_{\rho\in z_0(\chi, T, R)}\frac{x^{\beta-1}}{\abs\rho},\\
\mu_1=&\sum_\chi\sum_{\rho\in z_1(\chi, T, R)}\frac{x^{\beta-1}}{\abs\rho}.
\end{split}
\end{equation}
Then we can easily see that
\begin{equation}\label{eq32}
\Sigma\leq\mu_0+\mu_1+E_0\frac{x^{\beta_0-1}}{\beta_0}.
\end{equation}

Now results in \cite{LW} concerning the number of zeros near the line $\sigma=1$
enable us to improve an upper bound for $\Sigma$ by choosing $R$ larger than $R_0$.

We can easily see that
\begin{equation}
\sum_{\rho\in z_0(\chi, T, R)}\frac{x^{\beta-1}}{\abs\rho}\leq \frac{1}{2}x^{-1/R\log qT}\left(2N(\chi, 1)+\int_1^T\frac{dN(\chi, t)}{t}\right),
\end{equation}
Using to (3.30) in \cite[p. 278]{MC2}, we obtain
\begin{equation}
2N(\chi, 1)+\int_1^T\frac{dN(\chi, t)}{t}\leq \frac{N(\chi, T)}{T}+N(\chi, 1)+\frac{\log^2 T}{2\pi}+F(1)\log T+R(1)+C_1.
\end{equation}
Since $N(T)\leq F(T)+R(T)$ and $N(1)\leq F(1)+R(1)$, this is at most
\begin{equation}
\leq S:=\frac{\al^2+2\al\al_1}{2\pi}l^2+\frac{2\al_1-\al\log (2\pi)}{\pi}l+\frac{R(T)}{T}+2R(1)-\frac{2(1+\log (2\pi))}{\pi}+C_1,
\end{equation}
where $l=\log L=\log\log x$.

Hence we obtain $\mu_0\leq (\vph(q)S)x^{-1/R\log qT}/2$ and using (\ref{eq31}), we conclude that
\begin{equation}\label{eq33}
\sum_{\chi\pmod{q}}\abs{\psi(t, \chi)-\delta(\chi)t}\leq \mu_1+\frac{qS}{2}x^{-1/R\log qT}+1.38\frac{x}{\log^{\al_2+1} x}.
\end{equation}

In order to estimate the remaining term $\mu_1$, we use the following inequality.

\begin{lem}\label{lem34}
Let $\rho=\beta+it$ be a zero of $L(s, \chi)$ with $\beta<1-1/R_0\log qT, \abs{t}\leq T$ and $\rho\neq 0$.
Let $\lm$ be such that $\beta=1-\lm/\log qT$.  Then we have

\begin{equation}\label{eq341}
\abs{\frac{x^{\rho-1}}{\rho}}<\begin{cases}\frac{x^{-1/R_0\log q}}{1-\frac{1}{R_0\log q}} &\text{if }\abs{t}\leq 1,\\
q\exp -2\sqrt\frac{\log x}{R_0} &\text{if } \abs{t}>1, \exp\sqrt\frac{x}{R_0}<(qT)^{1/R_0\lm},\\
q\frac{x^{-\lm/\log qT}}{(qT)^{1/R_0\lm}} &\text{otherwise}.
\end{cases}
\end{equation}
\end{lem}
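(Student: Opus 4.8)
The plan is to reduce the whole statement to the identity $\abs{x^{\rho-1}/\rho}=x^{\beta-1}/\abs\rho$ together with the observation that $x^{\beta-1}=x^{-\lm/\log qH}$ by the definition of $\lm$; everything then comes down to producing, in each of the two regimes $\abs t\leq 1$ and $\abs t>1$, a good lower bound for $\abs\rho$ out of the zero-free region of Lemma \ref{lem30}. Throughout I use that $\rho$ is not the exceptional zero of Lemma \ref{lem30}, which has already been separated off in (\ref{eq32}); the zero-free region then forces $\beta$ away from the line $\sigma=1$, quantitatively $\beta\leq 1-1/R_0\log\max\{q,q\abs t\}$.

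For the first branch I take $\abs t\leq 1$, so that $\max\{q,q\abs t\}=q$ and hence $\beta\leq 1-1/R_0\log q$. The trivial bound $\abs\rho\geq\beta$ gives $\abs{x^{\rho-1}/\rho}\leq x^{\beta-1}/\beta$, and since $g(\sigma)=x^{\sigma-1}/\sigma$ is increasing for $\sigma>1/\log x$---a condition met by every $\beta$ of interest because $x>e^{200}$---monotonicity yields $g(\beta)\leq g(1-1/R_0\log q)=x^{-1/R_0\log q}/(1-1/R_0\log q)$, which is exactly the first case. The only caveat is a zero so deep that $\beta<1/\log x$, but for such a zero $x^{\beta-1}$ is already far below the asserted bound, so it needs only a one-line remark.

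For the remaining two branches I take $\abs t>1$, so that $\max\{q,q\abs t\}=q\abs t$ and Lemma \ref{lem30} reads $1-\beta\geq 1/R_0\log q\abs t$. Rewriting through $\beta=1-\lm/\log qH$ gives $\log q\abs t\geq \log qH/R_0\lm$, that is $q\abs t\geq (qH)^{1/R_0\lm}$; since $\abs\rho\geq\abs t$ this is the lower bound $\abs\rho\geq (qH)^{1/R_0\lm}/q$ that I need. Substituting into $x^{\beta-1}/\abs\rho$ produces
\begin{equation}
\abs{\frac{x^{\rho-1}}{\rho}}\leq q\frac{x^{-\lm/\log qH}}{(qH)^{1/R_0\lm}},
\end{equation}
which is the third case, and the second case follows by the arithmetic--geometric mean inequality applied to the exponent,
\begin{equation}
\frac{\lm\log x}{\log qH}+\frac{\log qH}{R_0\lm}\geq 2\sqrt{\frac{\log x}{R_0}},
\end{equation}
whence the right-hand side above is at most $q\exp(-2\sqrt{\log x/R_0})$. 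The side condition $\exp\sqrt{\log x/R_0}<(qH)^{1/R_0\lm}$ is just the inequality $\lm<\log qH/\sqrt{R_0\log x}$ recording which side of the optimum $\lm$ lies on, hence which of the two bounds (coincident at the optimum) is the sharper one to state.

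The step I expect to be the crux is reading the zero-free region in the correct direction for $\abs t>1$: Lemma \ref{lem30} must be used to push the non-exceptional zero \emph{away} from $\sigma=1$, i.e. to bound $1-\beta$ from below, so that it delivers a \emph{lower} bound for $\abs t$ and hence for $\abs\rho$; it is precisely this lower bound that converts the factor $1/\abs\rho$ into the decaying factor $q/(qH)^{1/R_0\lm}$. Once it is in hand the remaining ingredients---monotonicity of $g$ in the first case and the one-line arithmetic--geometric mean estimate in the second---are routine, and the sole bookkeeping point is that Lemma \ref{lem30} applies only to non-exceptional zeros, consistent with the exceptional zero having been extracted in (\ref{eq32}).
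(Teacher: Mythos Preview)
Your proof is correct, and for $\abs t\leq 1$ it matches the paper's (monotonicity of $\sigma\mapsto x^{\sigma-1}/\sigma$ together with the zero-free region); you are in fact slightly more careful than the paper about the edge case $\beta<1/\log x$.

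For $\abs t>1$ you and the paper argue in reverse orders. The paper first proves the second bound directly, by using the zero-free region in the form $\beta\leq 1-1/(R_0\log q\abs t)$ and applying the arithmetic--geometric mean inequality to $\frac{1}{R_0\log q\abs t}+\frac{\log q\abs t}{\log x}$; it then obtains the third bound by splitting on $\abs t\lessgtr t_1:=(qH)^{1/R_0\lm}/q$ and, in the sub-case $\abs t\leq t_1$, invoking the monotonicity of $t\mapsto x^{-1/R_0\log qt}/t$ below its maximum at $t_0:=q^{-1}\exp\sqrt{\log x/R_0}$ (this is where the side condition $t_0\geq t_1$ enters). Your route is more direct: reading the zero-free region as a lower bound on $\abs t$ gives $q\abs t\geq (qH)^{1/R_0\lm}$, i.e.\ $\abs t\geq t_1$ in every case, so the paper's sub-case $\abs t<t_1$ is vacuous and the third bound drops out immediately; you then recover the second bound from the third by the arithmetic--geometric mean inequality on $\frac{\lm\log x}{\log qH}+\frac{\log qH}{R_0\lm}$. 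A pleasant by-product of your argument is that the third-case bound in fact holds for \emph{all} $\abs t>1$, independently of the side condition; the statement as written merely records which of the two (both valid) bounds is the sharper one on each side of the optimum $\lm=\log qH/\sqrt{R_0\log x}$.
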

\begin{proof}
We may assume that $t>0$ and see that $\abs{\frac{x^{\rho-1}}{\rho}}<\frac{x^{\beta-1}}{t}$.
In the case $\abs{t}<1$, 
\begin{equation}
\abs{\frac{x^{\rho-1}}{\rho}}<\frac{x^{\beta-1}}{\beta}.
\end{equation}
Since $\beta>\frac{1}{R_0\log q}\geq\frac{1}{\log x}$, we have (\ref{eq341}) in this case.

If $t\geq 1$, then we have
\begin{equation}
\frac{x^{\beta-1}}{t}<\frac{x^{-1/R_0\log qt}}{t}\leq q\exp -2\sqrt\frac{\log x}{R_0}
\end{equation}
since
\begin{equation}
\frac{1}{R_0\log qt}+\frac{\log t}{\log x}=\frac{1}{R_0\log qt}+\frac{\log qt}{\log x}-\frac{\log q}{\log x}\geq 2\sqrt\frac{1}{R_0\log x}-\frac{\log q}{\log x}.
\end{equation}

Let $t_0=\frac{1}{q}\exp\sqrt\frac{x}{R_0}$ and $t_1=q^{\frac{1}{R_0\lm}-1}T^{\frac{1}{R_0\lm}}$.
Now we may assume that $t_0\geq t_1$, that is, $qt_0=\exp\sqrt\frac{x}{R_0}\geq (qT)^{1/R_0\lm}=qt_1$.
In the case $t\leq t_1$ we have
\begin{equation}
\frac{x^{-1/R_0\log qt}}{t}\leq \frac{x^{-1/R_0\log qt_1}}{t_1}=q\frac{x^{-\lm/\log qT}}{(qT)^{1/R_0\lm}}
\end{equation}
since $\frac{x^{-1/R_0\log qt}}{t}$ is increasing below $t_0$.
Moreover, in the other case we have
\begin{equation}
\frac{x^{-\lm/\log qT}}{t}\leq \frac{x^{-\lm/\log qT}}{t_1}=q\frac{x^{-\lm/\log qT}}{(qT)^{1/R_0\lm}}.
\end{equation}
\end{proof}

\section{Proof of main results}\label{mainresults}
Now we shall estimate $\mu_1$, the sum of $\abs{\frac{x^{\rho-1}}{\rho}}$ over zeros $\rho$ of $L(s, \chi)$ with $1-R<\log qT<\beta_i<1-R_0/\log qT, 1\leq \abs{t_i}\leq T$.

Let $\rho_1=\beta_1+it_1, \rho_2=\beta_2+it_2, \ldots$ be all zeros of $\Pi(s, q)$ with $\beta_i=1-\lm_i/\log qT, 1\leq t_i\leq T$
and $1/R_0<\lm_1\leq\lm_2\leq\lm_3\leq\ldots$.

We shall summarize some results of \cite{LW} concerning the distribution of zeros of $\Pi(s, q)$.

\begin{lem}\label{lem41}
Assume that $qT\geq 8\times 10^9$.  Then $\lm_1\leq \eta_i$ implies $\lm_2>\xi_i$ for each $i=1, 2, \ldots, 12$, where $\eta_i$'s and $\xi_i$'s are given as in Table \ref{tbl1},
and $\lm_3>0.26213$.
Moreover, if $qT\geq 10^{11}$, then
for each $n$ of Table \ref{tbl2}, $\lm_n$ must be greater than the corresponding value,
\end{lem}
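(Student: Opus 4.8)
The plan is to read these bounds off directly from the zero-repulsion estimates of Liu and Wang in \cite{LW}, since the sentence preceding the statement already signals that the lemma merely records their results in the normalization $\beta_i=1-\lm_i/\log qH$. The governing principle is of Deuring--Heilbronn type: a zero whose real part is very close to $1$ (equivalently, whose $\lm$ is small) repels the neighbouring zeros away from the line $\sigma=1$, forcing the next $\lm$ to be large. Lemma \ref{lem30} already guarantees $\lm_i>1/R_0$ for every $i$; the present lemma sharpens this into the conditional inequalities of Table \ref{tbl1} and the unconditional bounds of Table \ref{tbl2}.

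First I would isolate in \cite{LW} the explicit inequality coupling the two smallest normalized ordinates $\lm_1$ and $\lm_2$, together with its companion controlling $\lm_3$ and the higher $\lm_n$. These estimates come with numerical thresholds on the modulus--height product, and matching those thresholds is exactly what pins down the hypotheses $qH\geq 8\times 10^9$ and, for the sharper Table \ref{tbl2} bounds, $qH\geq 10^{11}$. For each prescribed $\eta_i$ I would then substitute $\lm_1=\eta_i$ into the coupling inequality and solve for the induced lower bound on $\lm_2$; because the relation is monotone across the relevant interval, the implication $\lm_1\leq\eta_i\Rightarrow\lm_2>\xi_i$ follows from the single endpoint evaluation, and likewise $\lm_3>0.26213$ is obtained by evaluating the third-zero estimate at the fixed value $R_0=6.3970$.

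All of these substitutions I would execute with the PARI--GP script already employed for the paper's other constants, so that each entry of Tables \ref{tbl1} and \ref{tbl2} is a certified numerical evaluation of a Liu--Wang inequality rather than a hand computation. The main obstacle I expect is purely one of bookkeeping: Liu and Wang state their results in their own scaling of the zeros and their own height variable, so before any threshold or tabulated constant can be transcribed I must verify that their normalized ordinate coincides with the present $\lm_i$ and that their height parameter coincides with $qH$. Once this dictionary is established, the remaining work is routine verification.
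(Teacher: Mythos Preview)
Your instinct to cite Liu--Wang is correct, and that is exactly what the paper does. But you are proposing far more work than is needed. The paper's proof is three sentences: Theorem~2 and Table~1 of \cite[pp.~269--270]{LW} already tabulate, in the same normalization $\beta=1-\lm/\log qH$, precisely the pairs $(\eta_i,\xi_i)$ appearing in Table~\ref{tbl1}; the bound $\lm_3>0.26213$ is the literal statement of Theorem~1 of \cite[p.~265]{LW}; and the values in Table~\ref{tbl2} are read off directly from Tables~3--5 of \cite[pp.~277--279]{LW}. There is no substitution into a coupling inequality, no endpoint evaluation, no PARI--GP step, and no normalization dictionary to set up, because Liu and Wang already work with $\lm$ defined exactly as here and already present the numerical outputs. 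Your remark that $\lm_3>0.26213$ comes from ``evaluating the third-zero estimate at the fixed value $R_0=6.3970$'' is also off: $R_0$ is the zero-free region constant from Lemma~\ref{lem30} and plays no role in this particular bound, which is an unconditional theorem of \cite{LW}.

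So the proposal is not wrong, but it misjudges where the work lies. The lemma is a pure transcription of results already stated numerically in \cite{LW}; recognizing this collapses your plan to a one-line citation.
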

\begin{proof}
Theorem 2 and Table 1 of \cite[p.p. 269--270]{LW} give the result for $\lm_1$ and $\lm_2$.
The result for $\lm_3$ is just Theorem 1 of \cite[p. 265]{LW}.
The remaining results follow from Table 3-5 of \cite[p.p. 277--279]{LW}.
\end{proof}

\begin{table}
\caption{constants $\eta_i$ and $\xi_i$}\label{tbl1}
\begin{center}
\begin{small}
\begin{tabular}{| c | c | c |}
 \hline
$i$ & $\eta_i$ & $\xi_i$ \\
 \hline
$1$ & $0.10367089$ & $0.3534$ \\
$2$ & $0.12$ & $0.3221$ \\
$3$ & $0.13$ & $0.3050$ \\
$4$ & $0.14$ & $0.2891$ \\
$5$ & $0.15$ & $0.2743$ \\
$6$ & $0.16$ & $0.2605$ \\
$7$ & $0.17$ & $0.2477$ \\
$8$ & $0.18$ & $0.2356$ \\
$9$ & $0.19$ & $0.2242$ \\
$10$ & $0.20$ & $0.2135$ \\
$11$ & $0.206$ & $0.2074$ \\
$12$ & $0.2067$ & $0.2067$ \\
 \hline
\end{tabular}
\end{small}
\end{center}
\end{table}

\begin{table}
\caption{constants related to each modulus}\label{tbl2}
\begin{center}
\begin{small}
\begin{tabular}{| c | c | c | c | c | c | c | c |}
 \hline
$n$ & $4$ & $5$ & $6$ & $7$ & $10$ & $18$ & $45$ \\
 \hline
$\lm_n>$ & $0.28$ & $0.31$ & $0.32$ & $0.33$ & $0.36$ & $0.39$ & $0.42$ \\
 \hline
\end{tabular}
\end{small}
\begin{small}
\begin{tabular}{| c | c | c | c | c | c |}
 \hline
$n$ & $91$ & $146$ & $332$ & $834$ & $7000$ \\
 \hline
$\lm_n> $ & $0.45$ & $0.46$ & $0.47$ & $0.475$ & $0.478$ \\
 \hline
\end{tabular}
\end{small}
\end{center}
\end{table}

By Lemma \ref{lem34}, we have $\abs{\frac{x^{\rho-1}}{\rho}}<\mu(\lm)$, where $\mu(\lm)$ is the function represented by the right-hand side of (\ref{eq341}).
Now Lemma \ref{lem41} gives an upper bound for $\mu_1$.

\begin{cor}\label{cor42}
Assume that $qT\geq 10^{11}$.  Then, for some $i=1, 2, \ldots, 11$ we have
\begin{equation}
\begin{split}
\mu_1<& 2\min_{0\leq J\leq 12}\left(\mu(\eta_i)+\mu(\xi_{i+1})+\sum_{j=1}^{J} M_j \mu(\max\{\xi_{i+1},\nu_j\})\right)
\end{split}
\end{equation}
with $R=R_{i, J}=\max\{\xi_{i+1}, \nu_{J+1}\}$, where $M_j$'s and $\nu_j$'s are defined in Table \ref{tbl3}.
\end{cor}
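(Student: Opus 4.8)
The plan is to bound $\mu_0+\mu_1$ by separating the zeros into a \emph{deep} part (those far from the line $\sigma=1$, i.e. with $\lambda$ large) and a \emph{shallow} part (finitely many zeros near $\sigma=1$), with the free parameter $R$ marking the cutoff. For the deep part I would invoke Lemma \ref{lem31}: after replacing $\vph(q)$ by $q$ and discarding the terms $x^{-1}S_1$ and $\tfrac12 x^{-1/2}S_2$, which under $qH\geq 10^{11}$ and $x>X_0$ are swamped by $x^{-1/R\log qH}$, this yields the summand $q(S_3+2S_4)x^{-1/R\log qH}$. For the shallow part I would take the enumeration $\rho_1,\rho_2,\ldots$ of the zeros of $\Pi(s)$ with $1\leq t_i\leq H$ and $1/R_0<\lambda_1\leq\lambda_2\leq\cdots$, apply Lemma \ref{lem34} to replace each $\abs{x^{\rho_k-1}/\rho_k}$ by $\mu(\lambda_k)$, and multiply by $2$ to account for the conjugate zeros with $-H\leq t\leq-1$; the $\abs{t}<1$ zeros, controlled by the zero-free region of Lemma \ref{lem30}, are absorbed into the deep estimate.

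The combinatorial core is to turn the lower bounds of Lemma \ref{lem41} into an upper bound for $\sum_k\mu(\lambda_k)$, using that $\mu$ is decreasing in $\lambda$ on the relevant range. I would first verify this monotonicity: for $1\leq\abs{t}\leq H$ only the third branch of (\ref{eq341}) is in play, since its switch with the second branch occurs at $\lambda^\ast=\log qH/\sqrt{R_0\log x}$, and $R_0(\log qH)^2\leq\log x$ for $x>X_0$ forces $\lambda^\ast<1/R_0<\lambda_1$, so $\mu$ is decreasing for every admissible $\lambda$. For the two closest zeros I would pick $i\in\{1,\ldots,11\}$ with $\eta_i<\lambda_1\leq\eta_{i+1}$ (possible because the zero-free region gives $\lambda_1>1/R_0>\eta_1$): monotonicity gives $\mu(\lambda_1)<\mu(\eta_i)$, while Lemma \ref{lem41} applied with index $i+1$ gives $\lambda_2>\xi_{i+1}$ and hence $\mu(\lambda_2)<\mu(\xi_{i+1})$; the extreme case $\lambda_1>\eta_{12}=\xi_{12}$ folds into $i=11$ since then $\mu(\lambda_1),\mu(\lambda_2)<\mu(\xi_{12})<\mu(\eta_{11})$.

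For the remaining zeros I would read off from Table \ref{tbl2} the number $M_j$ of zeros lying in each band $[\nu_j,\nu_{j+1})$ and bound their total contribution by $M_j\,\mu(\max\{\xi_{i+1},\nu_j\})$, the maximum with $\xi_{i+1}$ being needed because, when $\lambda_1$ is small, $\nu_j$ may drop below the bound $\xi_{i+1}$ already established for $\lambda_2$. Choosing the cutoff $R=\max\{\xi_{i+1},\nu_{J+1}\}$ then relegates every zero not among the first $J$ bands to the deep regime governed by the Lemma \ref{lem31} term; summing the first $J$ bands with the two leading zeros produces the bracketed expression, and minimizing over $0\leq J\leq 12$ gives the stated inequality. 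The quantities $M_j,\nu_j$ of Table \ref{tbl3} are precisely the band multiplicities and thresholds extracted from the cumulative bounds $\lambda_n>(\text{value})$ of Table \ref{tbl2}.

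I expect the main obstacle to be exactly this bookkeeping: converting the cumulative lower bounds $\lambda_n>(\text{value})$ into consistent band data $(M_j,\nu_j)$ while simultaneously keeping the cutoff $R$ large enough that the hypotheses supporting the Lemma \ref{lem31} estimate (in particular $RL>1$ and the comparison $\frac{RL}{2(RL-1)}<C_6$) remain valid. A secondary difficulty, which legitimizes replacing each $\lambda_k$ by its proven lower bound, is the uniform monotonicity of the piecewise $\mu$ across the branch switch of (\ref{eq341}); both reduce to verifying a handful of inequalities under the standing assumptions $qH\geq 10^{11}$ and $x>X_0$.
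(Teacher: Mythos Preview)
Your plan is correct and mirrors the paper's own argument, which is stated in a single sentence: apply Lemma~\ref{lem34} to replace each $\abs{x^{\rho-1}/\rho}$ by $\mu(\lambda)$ and then invoke Lemma~\ref{lem41}; the term $q(S_3+2S_4)x^{-1/R\log qH}$ for the deep part is exactly the Lemma~\ref{lem31} contribution after bounding $\varphi(q)\le q$ and absorbing $x^{-1}S_1+\tfrac12 x^{-1/2}S_2$ into the dominant $x^{-1/R\log qH}$ term. Your added verification of the monotonicity of $\mu$ past the switching point $\lambda^\ast$ and your bookkeeping for the band multiplicities $(M_j,\nu_j)$ are precisely the details the paper suppresses.
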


So that the sum $\vph(q)Sx^{-1/R\log qT}+\mu_1$ in (\ref{eq33}) is at most
\begin{equation}\label{eq41}
\begin{split}
\max_{1\leq i\leq 11} \min_{0\leq J\leq 12} qSx^{-1/R_{i, J}\log qT}+2\left(\mu(\eta_i)+\mu(\xi_{i+1})+\sum_{j=1}^{J} M_j \mu(\max\{\xi_{i+1},\nu_j\})\right).
\end{split}
\end{equation}

Now we shall prove Theorem \ref{thm11}.  We shall use (\ref{eq41});
we have $qT\geq 10^{11}$ for any value of $\alpha_2$ in Table \ref{tbl4}.
For each pair $(\alpha_1, \alpha_2)$ in Table \ref{tbl4},
we calculated an upper bound for $C$ in appropriate ranges
for each $X_0$ such that $10\log\log X_0$ is an integer and we obtained the desired results
using PARI-GP (The output is available from
\url{http://tyamada1093.web.fc2.com/math/files/prim0003pariout.txt}).
This completes the proof of Theorem \ref{thm11}.

\begin{table}
\caption{constants $\nu_j$ and $M_j$ in Corollary \ref{cor42}}\label{tbl3}
\begin{center}
\begin{small}
\begin{tabular}{| c | c | c | c | c | c | c | c | c |}
 \hline
$j$ & $1$ & $2$ & $3$ & $4$ & $5$ & $6$ & $7$ & $8$ \\
 \hline
$\nu_j$ & $0.26213$ & $0.27$ & $0.30$ & $0.32$ & $0.33$ & $0.36$ & $0.39$ & $0.42$ \\
 \hline
$M_j$ & $1$ & $1$ & $1$ & $1$ & $4$ & $7$ & $27$ & $57$ \\
 \hline
\end{tabular}
\end{small}
\begin{small}
\begin{tabular}{| c | c | c | c | c | c |}
 \hline
$j$ & $9$ & $10$ & $11$ & $12$ & $13$ \\
 \hline
$\nu_j$ & $0.45$ & $0.46$ & $0.47$ & $0.475$ & $0.478$ \\
 \hline
$M_j$ & $55$ & $186$ & $502$ & $6166$ & -- \\
 \hline
\end{tabular}
\end{small}
\end{center}
\end{table}

Next we shall prove Theorem \ref{thm12}.
Since our assumption implies that $q<\log^2 x$, we have by (\ref{eq33}) with $\alpha_1=2$,
\begin{equation}\label{eq51}
-1+x^{-1}\sum_{\chi\pmod{q}}\abs{\psi(x, \chi)}<\frac{C\log\log x}{\log x}+E_0\frac{x^{\beta_0-1}}{\beta_0}.
\end{equation}

Now assume that $E_0=1$ and let $\beta_0>\frac{1}{2}$ be the Siegel zero of $L(s, \chi)$.
Then, Theorem 3 of \cite{LW} gives $\beta_0\leq 1-\frac{\pi}{0.4923q^{1/2}\log^2 q}$.

Since we have assumed that $\frac{0.4923A}{\pi}q^{1/2}\log^2 q<\log x/\log\log x$, we have
\begin{equation}\label{eq52}
\frac{x^{\beta_0-1}}{\beta_0}<\frac{x^{-\frac{\pi}{0.4923q^{1/2}\log^2 q}}}{1-\frac{\pi}{0.4923q^{1/2}\log^2 q}}<\frac{1}{\left(1-\frac{A\log\log x}{\log x}\right)\log^A x}.
\end{equation}

Combining (\ref{eq51}) and (\ref{eq52}), we have
\begin{equation}\label{eq53}
\frac{\vph(q)}{x}\abs{\psi(x; q, a)-\frac{x}{\vph(q)}}<\frac{C^\prime}{\log^A x}
\end{equation}
for $x\geq X_0$ and $\frac{0.4923A}{\pi}q^{1/2}\log^2 q<\log x/\log\log x$.

\begin{table}[t]
\caption{constants related to Theorem 1}\label{tbl4}
\begin{center}
\begin{small}
\begin{tabular}{| c | c | c | l |}
\hline
$\alpha_1$ & $\alpha$ & $Y_0$ & $C$ \\
\hline
$1$ & $1$ & $68$ & $341.37279$ \\
$1$ & $1$ & $69$ & $139.24784$ \\
$1$ & $1$ & $70$ & $51.198745$ \\
$1$ & $1$ & $71$ & $16.805224$ \\
$1$ & $1$ & $72$ & $4.9039512$ \\
$1$ & $1$ & $73$ & $1.2429184$ \\
$1$ & $1$ & $74$ & $0.2720959$ \\
$1$ & $1$ & $75$ & $0.0511499$ \\
$1$ & $1$ & $76$ & $0.0085318$ \\
$1$ & $1$ & $77$ & $0.0016318$ \\
$1$ & $1$ & $78$ & $0.0006712$ \\
$1$ & $1$ & $79$ & $0.0005215$ \\
\hline
$1$ & $2$ & $77$ & $287.04559$ \\
$1$ & $2$ & $78$ & $49.708220$ \\
$1$ & $2$ & $79$ & $7.0904806$ \\
$1$ & $2$ & $80$ & $0.8182991$ \\
$1$ & $2$ & $81$ & $0.0751978$ \\
$1$ & $2$ & $82$ & $0.0056819$ \\
$1$ & $2$ & $83$ & $0.0006284$ \\
$1$ & $2$ & $84$ & $0.0003224$ \\
$1$ & $2$ & $85$ & $0.0002818$ \\
\hline
$1$ & $3$ & $84$ & $27.074957$ \\
$1$ & $3$ & $85$ & $1.4777161$ \\
$1$ & $3$ & $86$ & $0.0591888$ \\
$1$ & $3$ & $87$ & $0.0018985$ \\
$1$ & $3$ & $88$ & $0.0002410$ \\
$1$ & $3$ & $89$ & $0.0001891$ \\
\hline
$1$ & $4$ & $88$ & $169.98979$ \\
$1$ & $4$ & $89$ & $4.3948140$ \\
$1$ & $4$ & $90$ & $0.0767410$ \\
$1$ & $4$ & $91$ & $0.0010204$ \\
$1$ & $4$ & $92$ & $0.0001459$ \\
\hline
$1$ & $5$ & $92$ & $94.371270$ \\
$1$ & $5$ & $93$ & $0.8513896$ \\
$1$ & $5$ & $94$ & $0.0047470$ \\
$1$ & $5$ & $95$ & $0.0001181$ \\
$1$ & $5$ & $96$ & $0.0000938$ \\
\hline
$1$ & $6$ & $95$ & $411.32655$ \\
$1$ & $6$ & $96$ & $1.5600920$ \\
$1$ & $6$ & $97$ & $0.0033364$ \\
$1$ & $6$ & $98$ & $0.0000803$ \\
\hline
\end{tabular}
\begin{tabular}{| c | c | c | l |}
\hline
$\alpha_1$ & $\alpha$ & $Y_0$ & $C$ \\
\hline
$1$ & $7$ & $98$ & $224.55734$ \\
$1$ & $7$ & $99$ & $0.2773700$ \\
$1$ & $7$ & $100$ & $0.0002301$ \\
$1$ & $7$ & $101$ & $0.0000569$ \\
\hline
$1$ & $8$ & $101$ & $8.1198690$ \\
$1$ & $8$ & $102$ & $0.0024266$ \\
$1$ & $8$ & $103$ & $0.0000469$ \\
\hline
$1$ & $9$ & $103$ & $70.055386$ \\
$1$ & $9$ & $104$ & $0.0085547$ \\
$1$ & $9$ & $105$ & $0.0000385$ \\
\hline
$1$ & $10$ & $105$ & $145.53327$ \\
$1$ & $10$ & $106$ & $0.0062008$ \\
$1$ & $10$ & $107$ & $0.0000313$ \\
$1$ & $10$ & $108$ & $0.0000001$ \\
\hline
$2$ & $1$ & $77$ & $366.59584$ \\
$2$ & $1$ & $78$ & $63.466310$ \\
$2$ & $1$ & $79$ & $9.0503268$ \\
$2$ & $1$ & $80$ & $1.0500668$ \\
$2$ & $1$ & $81$ & $0.0967331$ \\
$2$ & $1$ & $82$ & $0.0072533$ \\
$2$ & $1$ & $83$ & $0.0007180$ \\
$2$ & $1$ & $84$ & $0.0003264$ \\
$2$ & $1$ & $85$ & $0.0002820$ \\
\hline
$2$ & $2$ & $84$ & $33.655774$ \\
$2$ & $2$ & $85$ & $1.8367608$ \\
$2$ & $2$ & $86$ & $0.0735210$ \\
$2$ & $2$ & $87$ & $0.0023052$ \\
$2$ & $2$ & $88$ & $0.0002489$ \\
$2$ & $2$ & $89$ & $0.0001892$ \\
\hline
$2$ & $3$ & $88$ & $206.74646$ \\
$2$ & $3$ & $89$ & $5.3442833$ \\
$2$ & $3$ & $90$ & $0.0932719$ \\
$2$ & $3$ & $91$ & $0.0012073$ \\
$2$ & $3$ & $92$ & $0.0001472$ \\
\hline
$2$ & $4$ & $92$ & $112.75512$ \\
$2$ & $4$ & $93$ & $1.0170960$ \\
$2$ & $4$ & $94$ & $0.0056481$ \\
$2$ & $4$ & $95$ & $0.0001209$ \\
$2$ & $4$ & $96$ & $0.0000938$ \\
\hline
\end{tabular}
\end{small}
\end{center}
\end{table}

\begin{table}[t]
\begin{center}
\begin{small}
\begin{tabular}{| c | c | c | l |}
\hline
$\alpha_1$ & $\alpha$ & $Y_0$ & $C$ \\
\hline
$2$ & $5$ & $95$ & $484.26872$ \\
$2$ & $5$ & $96$ & $1.8365549$ \\
$2$ & $5$ & $97$ & $0.0039123$ \\
$2$ & $5$ & $98$ & $0.0000809$ \\
\hline
$2$ & $6$ & $98$ & $261.09757$ \\
$2$ & $6$ & $99$ & $0.3224670$ \\
$2$ & $6$ & $100$ & $0.0002573$ \\
$2$ & $6$ & $101$ & $0.0000569$ \\
\hline
$2$ & $7$ & $101$ & $9.3404567$ \\
$2$ & $7$ & $102$ & $0.0027835$ \\
$2$ & $7$ & $103$ & $0.0000469$ \\
\hline
$2$ & $8$ & $103$ & $79.844778$ \\
$2$ & $8$ & $104$ & $0.0097436$ \\
$2$ & $8$ & $105$ & $0.0000386$ \\
\hline
$2$ & $9$ & $105$ & $164.53130$ \\
$2$ & $9$ & $106$ & $0.0070055$ \\
$2$ & $9$ & $107$ & $0.0000313$ \\
\hline
$2$ & $10$ & $107$ & $63.493001$ \\
$2$ & $10$ & $108$ & $0.0007963$ \\
$2$ & $10$ & $109$ & $0.0000256$ \\
$2$ & $10$ & $110$ & $0.0000001$ \\
\hline
$3$ & $1$ & $84$ & $40.187937$ \\
$3$ & $1$ & $85$ & $2.1926074$ \\
$3$ & $1$ & $86$ & $0.0890408$ \\
$3$ & $1$ & $87$ & $0.0028106$ \\
$3$ & $1$ & $88$ & $0.0002627$ \\
$3$ & $1$ & $89$ & $0.0001896$ \\
\hline
$3$ & $2$ & $88$ & $243.40542$ \\
$3$ & $2$ & $89$ & $6.2933477$ \\
$3$ & $2$ & $90$ & $0.1099001$ \\
$3$ & $2$ & $91$ & $0.0013993$ \\
$3$ & $2$ & $92$ & $0.0001487$ \\
\hline
$3$ & $3$ & $92$ & $131.08327$ \\
$3$ & $3$ & $93$ & $1.1823671$ \\
$3$ & $3$ & $94$ & $0.0065480$ \\
$3$ & $3$ & $95$ & $0.0001237$ \\
$3$ & $3$ & $96$ & $0.0000938$ \\
\hline
\end{tabular}
\begin{tabular}{| c | c | c | l |}
\hline
$\alpha_1$ & $\alpha$ & $Y_0$ & $C$ \\
\hline
$3$ & $4$ & $95$ & $557.01746$ \\
$3$ & $4$ & $96$ & $2.1123032$ \\
$3$ & $4$ & $97$ & $0.0044867$ \\
$3$ & $4$ & $98$ & $0.0000815$ \\
\hline
$3$ & $5$ & $98$ & $297.55254$ \\
$3$ & $5$ & $99$ & $0.3674601$ \\
$3$ & $5$ & $100$ & $0.0002844$ \\
$3$ & $5$ & $101$ & $0.0000569$ \\
\hline
$3$ & $6$ & $101$ & $10.558512$ \\
$3$ & $6$ & $102$ & $0.0031396$ \\
$3$ & $6$ & $103$ & $0.0000470$ \\
\hline
$3$ & $7$ & $103$ & $89.615782$ \\
$3$ & $7$ & $104$ & $0.0109304$ \\
$3$ & $7$ & $105$ & $0.0000386$ \\
\hline
$3$ & $8$ & $105$ & $183.49683$ \\
$3$ & $8$ & $106$ & $0.0078087$ \\
$3$ & $8$ & $107$ & $0.0000313$ \\
\hline
$3$ & $9$ & $107$ & $70.410044$ \\
$3$ & $9$ & $108$ & $0.0008799$ \\
$3$ & $9$ & $109$ & $0.0000256$ \\
\hline
$3$ & $10$ & $109$ & $3.7235338$ \\
$3$ & $10$ & $110$ & $0.0000334$ \\
$3$ & $10$ & $111$ & $0.0000003$ \\
$3$ & $10$ & $112$ & $0.0000001$ \\
\hline
$4$ & $1$ & $88$ & $279.89541$ \\
$4$ & $1$ & $89$ & $7.2338934$ \\
$4$ & $1$ & $90$ & $0.1298417$ \\
$4$ & $1$ & $91$ & $0.0017720$ \\
$4$ & $1$ & $92$ & $0.0001558$ \\
\hline
$4$ & $2$ & $92$ & $149.37762$ \\
$4$ & $2$ & $93$ & $1.3500825$ \\
$4$ & $2$ & $94$ & $0.0075153$ \\
$4$ & $2$ & $95$ & $0.0001277$ \\
$4$ & $2$ & $96$ & $0.0000938$ \\
\hline
$4$ & $3$ & $95$ & $629.69393$ \\
$4$ & $3$ & $96$ & $2.3881358$ \\
$4$ & $3$ & $97$ & $0.0050654$ \\
$4$ & $3$ & $98$ & $0.0000822$ \\
\hline
\end{tabular}
\end{small}
\end{center}
\end{table}

\begin{table}[t]
\begin{center}
\begin{small}
\begin{tabular}{| c | c | c | l |}
\hline
$\alpha_1$ & $\alpha$ & $Y_0$ & $C$ \\
\hline
$4$ & $4$ & $98$ & $333.97042$ \\
$4$ & $4$ & $99$ & $0.4124195$ \\
$4$ & $4$ & $100$ & $0.0003115$ \\
$4$ & $4$ & $101$ & $0.0000569$ \\
\hline
$4$ & $5$ & $101$ & $11.775583$ \\
$4$ & $5$ & $102$ & $0.0034955$ \\
$4$ & $5$ & $103$ & $0.0000470$ \\
\hline
$4$ & $6$ & $103$ & $99.380273$ \\
$4$ & $6$ & $104$ & $0.0121164$ \\
$4$ & $6$ & $105$ & $0.0000387$ \\
\hline
$4$ & $7$ & $105$ & $202.45206$ \\
$4$ & $7$ & $106$ & $0.0086116$ \\
$4$ & $7$ & $107$ & $0.0000314$ \\
\hline
$4$ & $8$ & $107$ & $77.323788$ \\
$4$ & $8$ & $108$ & $0.0009635$ \\
$4$ & $8$ & $109$ & $0.0000256$ \\
\hline
$4$ & $9$ & $109$ & $4.0719188$ \\
$4$ & $9$ & $110$ & $0.0000343$ \\
\hline
$4$ & $10$ & $110$ & $10553.818$ \\
$4$ & $10$ & $111$ & $0.0204677$ \\
$4$ & $10$ & $112$ & $0.0000190$ \\
$4$ & $10$ & $113$ & $0.0000001$ \\
\hline
$5$ & $1$ & $92$ & $167.66332$ \\
$5$ & $1$ & $93$ & $1.5120638$ \\
$5$ & $1$ & $94$ & $0.0100915$ \\
$5$ & $1$ & $95$ & $0.0001509$ \\
$5$ & $1$ & $96$ & $0.0000939$ \\
\hline
$5$ & $2$ & $95$ & $702.27252$ \\
$5$ & $2$ & $96$ & $2.6767555$ \\
$5$ & $2$ & $97$ & $0.0058542$ \\
$5$ & $2$ & $98$ & $0.0000850$ \\
\hline
$5$ & $3$ & $98$ & $370.41678$ \\
$5$ & $3$ & $99$ & $0.4578772$ \\
$5$ & $3$ & $100$ & $0.0003417$ \\
$5$ & $3$ & $101$ & $0.0000570$ \\
\hline
\end{tabular}
\begin{tabular}{| c | c | c | l |}
\hline
$\alpha_1$ & $\alpha$ & $Y_0$ & $C$ \\
\hline
$5$ & $4$ & $101$ & $12.992382$ \\
$5$ & $4$ & $102$ & $0.0038532$ \\
$5$ & $4$ & $103$ & $0.0000470$ \\
\hline
$5$ & $5$ & $103$ & $109.13817$ \\
$5$ & $5$ & $104$ & $0.0133020$ \\
$5$ & $5$ & $105$ & $0.0000388$ \\
\hline
$5$ & $6$ & $105$ & $221.39416$ \\
$5$ & $6$ & $106$ & $0.0094139$ \\
$5$ & $6$ & $107$ & $0.0000314$ \\
\hline
$5$ & $7$ & $107$ & $84.233287$ \\
$5$ & $7$ & $108$ & $0.0010471$ \\
$5$ & $7$ & $109$ & $0.0000256$ \\
\hline
$5$ & $8$ & $109$ & $4.4201216$ \\
$5$ & $8$ & $110$ & $0.0000353$ \\
\hline
$5$ & $9$ & $110$ & $11419.253$ \\
$5$ & $9$ & $111$ & $0.0221440$ \\
$5$ & $9$ & $112$ & $0.0000190$ \\
\hline
$5$ & $10$ & $112$ & $22.759465$ \\
$5$ & $10$ & $113$ & $0.0000240$ \\
$5$ & $10$ & $114$ & $0.0000001$ \\
\hline
$6$ & $1$ & $95$ & $774.84324$ \\
$6$ & $1$ & $96$ & $2.9379648$ \\
$6$ & $1$ & $97$ & $0.0062067$ \\
$6$ & $1$ & $98$ & $0.0000955$ \\
\hline
$6$ & $2$ & $98$ & $406.74243$ \\
$6$ & $2$ & $99$ & $0.5177958$ \\
$6$ & $2$ & $100$ & $0.0005074$ \\
$6$ & $2$ & $101$ & $0.0000572$ \\
\hline
$6$ & $3$ & $101$ & $14.230994$ \\
$6$ & $3$ & $102$ & $0.0043119$ \\
$6$ & $3$ & $103$ & $0.0000473$ \\
\hline
$6$ & $4$ & $103$ & $118.90823$ \\
$6$ & $4$ & $104$ & $0.0145133$ \\
$6$ & $4$ & $105$ & $0.0000389$ \\
\hline
\end{tabular}
\end{small}
\end{center}
\end{table}

\begin{table}[t]
\begin{center}
\begin{small}
\begin{tabular}{| c | c | c | l |}
\hline
$\alpha_1$ & $\alpha$ & $Y_0$ & $C$ \\
\hline
$6$ & $5$ & $105$ & $240.33821$ \\
$6$ & $5$ & $106$ & $0.0102188$ \\
$6$ & $5$ & $107$ & $0.0000314$ \\
\hline
$6$ & $6$ & $107$ & $91.142884$ \\
$6$ & $6$ & $108$ & $0.0011307$ \\
$6$ & $6$ & $109$ & $0.0000256$ \\
\hline
$6$ & $7$ & $109$ & $4.7683260$ \\
$6$ & $7$ & $110$ & $0.0000362$ \\
\hline
$6$ & $8$ & $110$ & $12284.687$ \\
$6$ & $8$ & $111$ & $0.0238203$ \\
$6$ & $8$ & $112$ & $0.0000190$ \\
\hline
$6$ & $9$ & $112$ & $24.420945$ \\
$6$ & $9$ & $113$ & $0.0000245$ \\
\hline
$6$ & $10$ & $113$ & $14130.221$ \\
$6$ & $10$ & $114$ & $0.0024791$ \\
$6$ & $10$ & $115$ & $0.0000044$ \\
$6$ & $10$ & $116$ & $0.0000001$ \\
\hline
$7$ & $1$ & $98$ & $443.12389$ \\
$7$ & $1$ & $99$ & $0.5471351$ \\
$7$ & $1$ & $100$ & $0.0003927$ \\
$7$ & $1$ & $101$ & $0.0000574$ \\
\hline
$7$ & $2$ & $101$ & $15.423819$ \\
$7$ & $2$ & $102$ & $0.0082841$ \\
$7$ & $2$ & $103$ & $0.0000503$ \\
\hline
$7$ & $3$ & $103$ & $129.05908$ \\
$7$ & $3$ & $104$ & $0.0169840$ \\
$7$ & $3$ & $105$ & $0.0000402$ \\
\hline
$7$ & $4$ & $105$ & $259.38714$ \\
$7$ & $4$ & $106$ & $0.0111522$ \\
$7$ & $4$ & $107$ & $0.0000315$ \\
\hline
$7$ & $5$ & $107$ & $98.059267$ \\
$7$ & $5$ & $108$ & $0.0012172$ \\
$7$ & $5$ & $109$ & $0.0000256$ \\
\hline
$7$ & $6$ & $109$ & $5.1166158$ \\
$7$ & $6$ & $110$ & $0.0000372$ \\
\hline
\end{tabular}
\begin{tabular}{| c | c | c | l |}
\hline
$\alpha_1$ & $\alpha$ & $Y_0$ & $C$ \\
\hline
$7$ & $7$ & $110$ & $13150.105$ \\
$7$ & $7$ & $111$ & $0.0254970$ \\
$7$ & $7$ & $112$ & $0.0000190$ \\
\hline
$7$ & $8$ & $112$ & $26.082288$ \\
$7$ & $8$ & $113$ & $0.0000250$ \\
\hline
$7$ & $9$ & $113$ & $15059.773$ \\
$7$ & $9$ & $114$ & $0.0026411$ \\
\hline
$7$ & $10$ & $115$ & $0.4569526$ \\
$7$ & $10$ & $116$ & $0.0000127$ \\
$7$ & $10$ & $117$ & $0.0000001$ \\
\hline
$8$ & $1$ & $101$ & $16.639402$ \\
$8$ & $1$ & $102$ & $0.0049176$ \\
$8$ & $1$ & $103$ & $0.0000478$ \\
\hline
$8$ & $2$ & $103$ & $138.40436$ \\
$8$ & $2$ & $104$ & $0.0168563$ \\
$8$ & $2$ & $105$ & $0.0000514$ \\
\hline
$8$ & $3$ & $105$ & $278.20999$ \\
$8$ & $3$ & $106$ & $0.0176009$ \\
$8$ & $3$ & $107$ & $0.0000329$ \\
\hline
$8$ & $4$ & $107$ & $105.25995$ \\
$8$ & $4$ & $108$ & $0.0015069$ \\
$8$ & $4$ & $109$ & $0.0000256$ \\
\hline
$8$ & $5$ & $109$ & $5.4707339$ \\
$8$ & $5$ & $110$ & $0.0000394$ \\
\hline
$8$ & $6$ & $110$ & $14015.192$ \\
$8$ & $6$ & $111$ & $0.0271902$ \\
$8$ & $6$ & $112$ & $0.0000190$ \\
\hline
$8$ & $7$ & $112$ & $27.743089$ \\
$8$ & $7$ & $113$ & $0.0000255$ \\
\hline
$8$ & $8$ & $113$ & $15988.974$ \\
$8$ & $8$ & $114$ & $0.0028032$ \\
\hline
$8$ & $9$ & $115$ & $0.4842973$ \\
$8$ & $9$ & $116$ & $0.0000127$ \\
\hline
$8$ & $10$ & $116$ & $42.515883$ \\
$8$ & $10$ & $117$ & $0.0000119$ \\
$8$ & $10$ & $118$ & $0.0000001$ \\
\hline
\end{tabular}
\end{small}
\end{center}
\end{table}

\begin{table}[t]
\begin{center}
\begin{small}
\begin{tabular}{| c | c | c | l |}
\hline
$\alpha_1$ & $\alpha$ & $Y_0$ & $C$ \\
\hline
$9$ & $1$ & $103$ & $148.15370$ \\
$9$ & $1$ & $104$ & $0.0180405$ \\
$9$ & $1$ & $105$ & $0.0000390$ \\
\hline
$9$ & $2$ & $105$ & $297.13835$ \\
$9$ & $2$ & $106$ & $0.0126220$ \\
$9$ & $2$ & $107$ & $0.0000332$ \\
\hline
$9$ & $3$ & $107$ & $111.86299$ \\
$9$ & $3$ & $108$ & $0.0043396$ \\
$9$ & $3$ & $109$ & $0.0000263$ \\
\hline
$9$ & $4$ & $109$ & $6.0873692$ \\
$9$ & $4$ & $110$ & $0.0000896$ \\
\hline
$9$ & $5$ & $110$ & $14886.409$ \\
$9$ & $5$ & $111$ & $0.0302231$ \\
$9$ & $5$ & $112$ & $0.0000190$ \\
\hline
$9$ & $6$ & $112$ & $29.418754$ \\
$9$ & $6$ & $113$ & $0.0000267$ \\
\hline
$9$ & $7$ & $113$ & $16918.289$ \\
$9$ & $7$ & $114$ & $0.0029683$ \\
\hline
$9$ & $8$ & $115$ & $0.5116813$ \\
$9$ & $8$ & $116$ & $0.0000127$ \\
\hline
$9$ & $9$ & $116$ & $44.850504$ \\
$9$ & $9$ & $117$ & $0.0000119$ \\
\hline
$9$ & $10$ & $117$ & $1923.3233$ \\
$9$ & $10$ & $118$ & $0.0000167$ \\
$9$ & $10$ & $119$ & $0.0000001$ \\
\hline
\end{tabular}
\begin{tabular}{| c | c | c | l |}
\hline
$\alpha_1$ & $\alpha$ & $Y_0$ & $C$ \\
\hline
$10$ & $1$ & $105$ & $316.06672$ \\
$10$ & $1$ & $106$ & $0.0134237$ \\
$10$ & $1$ & $107$ & $0.0000314$ \\
\hline
$10$ & $2$ & $107$ & $118.76799$ \\
$10$ & $2$ & $108$ & $0.0014647$ \\
$10$ & $2$ & $109$ & $0.0000257$ \\
\hline
$10$ & $3$ & $109$ & $6.1605138$ \\
$10$ & $3$ & $110$ & $0.0002801$ \\
\hline
$10$ & $4$ & $110$ & $15744.953$ \\
$10$ & $4$ & $111$ & $0.0946919$ \\
$10$ & $4$ & $112$ & $0.0000209$ \\
\hline
$10$ & $5$ & $112$ & $31.907392$ \\
$10$ & $5$ & $113$ & $0.0000663$ \\
\hline
$10$ & $6$ & $113$ & $17854.358$ \\
$10$ & $6$ & $114$ & $0.0034270$ \\
\hline
$10$ & $7$ & $115$ & $0.5399268$ \\
$10$ & $7$ & $116$ & $0.0000128$ \\
\hline
$10$ & $8$ & $116$ & $47.186782$ \\
$10$ & $8$ & $117$ & $0.0000120$ \\
\hline
$10$ & $9$ & $117$ & $2020.8916$ \\
$10$ & $9$ & $118$ & $0.0000171$ \\
\hline
$10$ & $10$ & $118$ & $40634.957$ \\
$10$ & $10$ & $119$ & $0.0000610$ \\
$10$ & $10$ & $120$ & $0.0000001$ \\
\hline
\end{tabular}
\end{small}
\end{center}
\end{table}

\section{Proof of Thorem \ref{thm13}}
We see that
\begin{equation}
\abs{\psi(x, \chi)-\psi(x, \chi^*)}\leq\sum_{p^k\leq x, p\mid q}\log p<\omega(q)(\log x)\leq\log^2(qx),
\end{equation}
where $\omega(q)$ denotes the number of distinct prime factors of $q$.  Noting that $\psi(x, \chi_0^*)=\psi(x)$,
we have
\begin{equation}
\abs{\psi(x, q, a)-\frac{\psi(x)}{\vph(q)}}\leq\frac{1}{\vph(q)}\abs{\sum_{\chi\pmod{q}, \chi\neq \chi_0}\psi(x, \chi^*)}+\log^2(qx).
\end{equation}

Hence we obtain
\begin{equation}\label{eq60}
\begin{split}
\sum_{q\leq Q}\max_{a\pmod{q}}\abs{\psi(x, q, a)-\frac{\psi(x)}{\vph(q)}} \leq & Q\log^2 (Qx)+\sum_{1<q\leq Q, q_0\nmid q}\frac{1}{\vph(q)}\abs{\sum_{\chi\pmod{q}, \chi\neq \chi_0}\psi(x, \chi^*)} \\
< & x^\frac{1}{2}+\sum_{1<q\leq Q, q_0\nmid q}\frac{1}{\vph(q)}\abs{\sum_{\chi\pmod{q}, \chi\neq \chi_0}\psi(x, \chi^*)}.
\end{split}
\end{equation}
Since each character $\chi\pmod{q}$ is induced from a certain primitive character $\chi^*\pmod{q^*}$ with $q^*\mid q$,
the last sum is at most
\begin{equation}\label{eq61}
\begin{split}
& \sum_{\substack{m\geq 1, q^*>1,\\ q_0\nmid q^*,\\ mq^*\leq Q}}\frac{1}{\vph(mq^*)}\abs{\sum_{\chi\pmod{q^*}}^*\psi(x, \chi)}\\
\leq & \left(\sum_{1\leq m\leq Q}\frac{1}{\vph(m)}\right)\sum_{1<q\leq Q, q_0\nmid q}\abs{\sideset{}{^*}\sum_{\chi\pmod{q}}\psi(x, \chi)}.
\end{split}
\end{equation}

Using Theorem 1.2 of \cite{AH}, we have
\begin{equation}\label{eq62}
\begin{split}
\sum_{Q_1<q\leq Q}\frac{1}{\vph(q)}\abs{\sideset{}{^*}\sum_{\chi\pmod{q}}\psi(x, \chi)}\leq & c_0\left(\frac{4x}{Q_1}+4x^\frac{1}{2}Q+18x^\frac{2}{3}Q^\frac{1}{2}+\frac{5}{2}x^\frac{5}{6}\log x\right)\log^\frac{7}{2} x\\
\leq & c_0\left(\frac{4x}{Q_1}+\frac{4x}{\log^A x}+\frac{18x^\frac{11}{12}}{\log^\frac{A}{2} x}+\frac{5}{2}x^\frac{5}{6}\log x \right)\log^\frac{7}{2} x\\
\leq & \frac{4c_0 x}{Q_1}+\frac{c_0 x}{\log^{A-\frac{7}{2}} x}\left(4+\frac{18\log^\frac{A}{2} x}{x^\frac{1}{12}}+\frac{5\log^{A+1} x}{2x^\frac{1}{6}} \right)\\
< & \frac{4c_0 x}{Q_1}+\frac{c_0(4+e^{-800}) x}{\log^{A_1-\frac{7}{2}} x},
\end{split}
\end{equation}
where
\begin{equation}
\begin{split}
c_0= & \frac{2^{\frac{13}{2}}}{9\pi\log 2}\left(\frac{1}{3}+\frac{3}{2\log 2}\right)\left(\frac{2+\log(\log 2/\log(4/3))}{\log 2}\right)\sqrt\frac{\psi(113)}{113}\\
< & 48.833.
\end{split}
\end{equation}

Now we consider the sum $\sideset{}{^*}\sum_{\chi\pmod{q}}\psi(x, \chi)$ for moduli $q$ with $q\leq Q_1$, $q_0\nmid q$.

The proof of Theorem 3.6 of \cite{MC2} shows that the left-hand side quantity $\frac{\vph(q)}{x}\abs{\psi(x; q, a)-\frac{x}{\vph(q)}}$
in this theorem can be replaced by $-1+x^{-1}\sum_{\chi\pmod{q}}\abs{\psi(x, \chi)}$.  This also applies to Theorem \ref{thm11}.

Now, provided that $x\geq x_0$, Theorem \ref{thm11} gives
\begin{equation}\label{eq63}
\abs{-1+\frac{1}{x}\sum_{\chi\pmod{q}}\abs{\psi(x, \chi)}}<\frac{C_0}{\log^{A+1} x}+E_0\frac{x^{1-\beta_0}}{\beta_0},
\end{equation}
where $E_0=1$ and $\beta_0$ denote the Siegel zero modulo $q$ if it exists and $E_0=0$ otherwise.

We see that
\begin{equation}
\begin{split}
\sideset{}{^*}\sum_{\chi\pmod{q}}\abs{\psi(x, \chi)}\leq & \abs{-1+\frac{1}{x}\sum_{\chi\pmod{q}}\abs{\psi(x, \chi)}}+\abs{\frac{\psi(x, \chi_0)}{x}-1}\\
& \leq \abs{-1+\frac{1}{x}\sum_{\chi\pmod{q}}\abs{\psi(x, \chi)}}+\abs{\frac{\psi(x)}{x}-1}+\frac{\log q}{x\log 2}\\
& \leq \abs{-1+\frac{1}{x}\sum_{\chi\pmod{q}}\abs{\psi(x, \chi)}}+\frac{e^{-18} x}{\log^{A-3} x},
\end{split}
\end{equation}
using the inequality (Theorem 11 of \cite{Sch})
\begin{equation}
\abs{\frac{\psi(x)}{x}-1}<\sqrt{\frac{8}{17\pi}}\left(\frac{\log x}{R_2}\right)^\frac{1}{4}\exp -\sqrt{\frac{\log x}{R_2}}
\end{equation}
for $x\geq 101$, where $R_2=9.645908801$.
Therefore (\ref{eq63}) becomes
\begin{equation}\label{eq64}
\sideset{}{^*}\sum_{\chi\pmod{q}}\abs{\psi(x, \chi)}<\frac{(C_0+e^{-18})x}{\log^{A-3} x}+E_0\frac{x^{1-\beta_0}}{1-\beta_0},
\end{equation}
where $E_0=1$ and $\beta_0$ denote the Siegel zero modulo $q$ if it exists and $E_0=0$ otherwise.

Since $q\leq Q_1$ is non-exceptional, the right-hand side of (\ref{eq64}) is at most
\begin{equation}
\frac{(C_0+e^{-18})x}{\log^{A-3} x}+\frac{x^{1-\frac{1}{2AR_1\log\log x}}}{1-\frac{1}{2AR_1\log\log x}}.
\end{equation}
We can easily confirm that, provided that $x\geq x_0$,
\begin{equation}
\frac{x^{1-\frac{1}{2AR_1\log\log x}}}{1-\frac{1}{2AR_1\log\log x}}<e^{-50}\frac{x}{\log^{A-3} x}
\end{equation}
and therefore
\begin{equation}
\sideset{}{^*}\sum_{\chi\pmod{q}}\abs{\psi(x, \chi)}<\frac{(C_0+e^{-17})x}{\log^{A-3} x}.
\end{equation}
We see that $\sum_{1\leq m\leq Q_1}\frac{1}{\vph(m)}<c_1(1+\log Q_1)$ from the argument
in the proof of Theorem A. 17 in \cite[p. 316]{Nat}.
Thus we obtain
\begin{equation}\label{eq65}
\sum_{q\leq Q_1, q_0\nmid q}\frac{1}{\vph(q)}\sideset{}{^*}\sum_{\chi\pmod{q}}\abs{\psi(x, \chi)}\leq \frac{c_1(C_0+e^{-17})x(1+A\log\log x)}{\log^{A-3} x}.
\end{equation}

Observing that $\sum_{m\leq Q}\frac{1}{\vph(m)}<c_1(1+\log Q)<\frac{c_1}{2}\log x$
and substituting (\ref{eq62}), (\ref{eq65}) into (\ref{eq61}), we see that the last sum of (\ref{eq60}) is at most
\begin{equation}
\frac{c_1c_0(2+e^{-800})x}{\log^{A-\frac{9}{2}} x}+\frac{2c_0c_1x\log^\frac{9}{2} x}{Q_1}+\frac{c_1^2(C_0+e^{-17})x(1+A\log\log x)}{2\log^{A-4} x}.
\end{equation}
This proves Theorem \ref{thm13}.

{}
\vskip 12pt

{\small Tomohiro Yamada}\\
{\small Center for Japanese language and culture\\Osaka University\\562-8558\\8-1-1, Aomatanihigashi, Minoo, Osaka\\Japan}\\
{\small e-mail: \href{mailto:tyamada1093@gmail.com}{tyamada1093@gmail.com}}
\end{document}